\newcommand{\abs}[1]{\left\lvert#1\right\rvert}
\newtheorem{theorem}{Theorem}[section]
\newtheorem{conjecture}{Conjecture}[section]
\def\smallskip{\addvspace{\smallskipamount}}
\def\medskip{\addvspace{\medskipamount}}
\def\bigskip{\addvspace{\bigskipamount}}
\def\makefootline{\baselineskip=24pt \line{\the\footline}}
\def\pagecontents{\ifvoid\topins\else\unvbox\topins\fi
   \dimen@=\dp255 \unvbox255
   \ifvoid\footins\else
      \vskip\skip\footins \footnoterule \unvbox\footins\fi
     \ifr@ggedbottom \kern-\dimen@ \vfil \fi}
\def\footnoterule{\kern-3pt\hrule width 2truein \kern 2.6pt}
\begin{document}

\title{{Dynamics of $\displaystyle{z_{n+1}=\frac{\alpha + \alpha z_{n}+\beta z_{n-1}}{1+z_{n}}}$} in Complex Plane}

\author{Sk. Sarif Hassan\\
  \small {\emph{Department of Mathematics}}\\
  \small {\emph{University of Petroleum and Energy Studies}}\\
  \small {\emph{Bidholi, Dehradun, India}}\\
  \small Email: {\texttt{\textcolor[rgb]{0.00,0.00,1.00}{s.hassan@ddn.upes.ac.in}}}\\
}

\maketitle
\begin{abstract}
\noindent The dynamics of the second order rational difference equation $\displaystyle{z_{n+1}=\frac{\alpha + \alpha z_{n}+\beta z_{n-1}}{1+z_{n}}}$ with complex parameters $\alpha$, $\beta$ and arbitrary complex initial conditions is investigated. The same difference equation is well studied with positive real parameters and initial values and one of the main results on trichotomy of parameters is revisited in the complex set-up and the similar result is found to be true. In addition, the chaotic solutions of the equation is fetched out in the complex set-up which was absent in the real scenario.  \\
\end{abstract}

\begin{flushleft}\footnotesize
{Keywords: Rational difference equation, Local asymptotic stability, Chaotic trajectory and Periodicity. \\

{\bf Mathematics Subject Classification: 39A10, 39A11}}
\end{flushleft}

\section{Introduction and Background}

Consider the equation

\begin{equation}
\displaystyle{z_{n+1}=\frac{\alpha + \alpha z_{n}+\beta z_{n-1}}{1+z_{n}}},  n=0,1,2,\ldots
\label{equation:total-equationA}
\end{equation}%
where the parameters $\alpha$ and $\beta$ and the initial conditions $z_{-1}$  and $z_{0}$ are arbitrary complex numbers.

\addvspace{\bigskipamount}
\noindent
This second order rational difference equation Eq.(\ref{equation:total-equationA}) is studied when the parameters $\alpha$ and $\beta$ a real number and the initial conditions are non-negative real numbers in \cite{S-H}, \cite{K-L} \& \cite{C-L}. In this present article it is an attempt to understand the dynamics by means of local asymptotic stability of equilibriums, periodicity, chaoticity of trajectories in the complex plane.\\

\noindent
In the real parameters and initial values, the main result on trichotomy character on the parameters is given below. It is noted that $\alpha$ and $\beta$ and the initial values are positive real numbers \cite{Gibb}.
\begin{enumerate}
  \item Every solution of Eq.(\ref{equation:total-equationA}) has a finite limit if and only if $\beta < \alpha+1$.
  \item Every solution of Eq.(\ref{equation:total-equationA}) converges to period two solution if and only if $\beta = \alpha+1$.
  \item Every solution of Eq.(\ref{equation:total-equationA}) is unbounded if and only if $\beta > \alpha+1$.\\
\end{enumerate}

\noindent
Here our main aim is to study the dynamics of the (\ref{equation:total-equationA}) under the assumption that the parameters and the initial conditions are arbitrary complex numbers.

\section{Local Asymptotic Stability of the Equilibriums}
In this section, the local stability character of the equilibria of Eq.(\ref{equation:total-equationA}) is established \cite{S-E}.

\subsection{Local Asymptotic Stability of $z_{n+1}=\frac{\alpha + \alpha z_{n}+\beta z_{n-1}}{1+z_{n}}$}
The equilibrium points of Eq.(\ref{equation:total-equationA}) are the solutions of the quadratic equation
\[
\bar{z}=\frac{\alpha+\alpha \bar{z}+\beta \bar{z}}{1+\bar{z}}
\]

\noindent
Note that if $\alpha=0$, $\bar{z}_{\pm}=0$ and $\alpha+\beta-1$ are two equilibriums of the equation Eq.(\ref{equation:total-equationA}).\\

\noindent
The Eq.(1) has the two equilibria points
$ \bar{z}_{1,2} =\frac{1}{2} \left(-1+\alpha +\beta -\sqrt{1+2 \alpha +\alpha ^2-2 \beta +2 \alpha  \beta +\beta ^2}\right)$ and $\frac{1}{2} \left(-1+\alpha +\beta +\sqrt{1+2 \alpha +\alpha ^2-2 \beta +2 \alpha  \beta +\beta ^2}\right)$ respectively when $\alpha \neq 0$.
\noindent
The linearized equation of the rational difference equation Eq.(\ref{equation:total-equationA}) with respect to the equilibrium point $ \bar{z}_{1} = \frac{1}{2} \left(-1+\alpha +\beta -\sqrt{1+2 \alpha +\alpha ^2-2 \beta +2 \alpha  \beta +\beta ^2}\right)$ is

\tiny

\begin{equation}
\label{equation:linearized-equation}
\displaystyle{
z_{n+1} + \frac{2 \beta  \left(-1+\alpha +\beta -\sqrt{(1+\alpha )^2+2 (-1+\alpha ) \beta +\beta ^2}\right)}{\left(1+\alpha +\beta -\sqrt{(1+\alpha )^2+2 (-1+\alpha ) \beta +\beta ^2}\right)^2} z_{n} -\frac{1}{2} \left(1+\alpha +\beta +\sqrt{(1+\alpha )^2+2 (-1+\alpha ) \beta +\beta ^2}\right) z_{n-1}=0,  n=0,1,\ldots
}
\end{equation}

\small

\noindent
with associated characteristic equation
\tiny
\begin{equation}
\lambda^{2} + \frac{2 \beta  \left(-1+\alpha +\beta -\sqrt{(1+\alpha )^2+2 (-1+\alpha ) \beta +\beta ^2}\right)}{\left(1+\alpha +\beta -\sqrt{(1+\alpha )^2+2 (-1+\alpha ) \beta +\beta ^2}\right)^2} \lambda -\frac{1}{2} \left(1+\alpha +\beta +\sqrt{(1+\alpha )^2+2 (-1+\alpha ) \beta +\beta ^2}\right) = 0.
\end{equation}

\addvspace{\bigskipamount}
\small

\noindent
By the \emph{Clark's theorem}, the equilibrium $\bar{z}_{1}=\frac{1}{2} \left(-1+\alpha +\beta -\sqrt{1+2 \alpha +\alpha ^2-2 \beta +2 \alpha  \beta +\beta ^2}\right)$  is \emph{locally asymptotically stable} if the sum of the modulus of two coefficients of the characteristic equation Eq. (3) is less than $1$. Therefore the condition for local asymptotic stability of the equilibrium would be $$\abs{\frac{2 \beta  \left(-1+\alpha +\beta -\sqrt{(1+\alpha )^2+2 (-1+\alpha ) \beta +\beta ^2}\right)}{\left(1+\alpha +\beta -\sqrt{(1+\alpha )^2+2 (-1+\alpha ) \beta +\beta ^2}\right)^2}} + \abs{\frac{1}{2} \left(1+\alpha +\beta +\sqrt{(1+\alpha )^2+2 (-1+\alpha ) \beta +\beta ^2}\right)}<1$$

\noindent
It is noted that that the conditional inequality for local asymptotic stability of the equilibrium attains maximum $19.7392$ at $\alpha=0.93601--0.293238i$ and $\beta=-0.00195532-0.211385i$. Also the minimum value attain by the inequality is $1.28533$ at $\alpha=0.86278+0.446302i$ and $\beta=-0.0309069+0.749819i$.\\ \\

\noindent
This numerical result suggests that there does not exist any $\alpha$ and $\beta$ so that the inequality holds. Therefore the equilibrium $\bar{z}_{1}=\frac{1}{2} \left(-1+\alpha +\beta -\sqrt{1+2 \alpha +\alpha ^2-2 \beta +2 \alpha  \beta +\beta ^2}\right)$  is \emph{unstable} indeed. \\ \\

\noindent
The linearized equation of the rational difference equation Eq.(\ref{equation:total-equationA}) with respect to the equilibrium point $ \bar{z}_{2} =\frac{1}{2} \left(-1+\alpha +\beta +\sqrt{1+2 \alpha +\alpha ^2-2 \beta +2 \alpha  \beta +\beta ^2}\right)$ is

\tiny
\begin{equation}
\label{equation:linearized-equation}
\displaystyle{
z_{n+1} + \frac{2 \beta  \left(-1+\alpha +\beta +\sqrt{(1+\alpha )^2+2 (-1+\alpha ) \beta +\beta ^2}\right)}{\left(1+\alpha +\beta +\sqrt{(1+\alpha )^2+2 (-1+\alpha ) \beta +\beta ^2}\right)^2} z_{n} - \frac{2 \beta }{1+\alpha +\beta +\sqrt{(1+\alpha )^2+2 (-1+\alpha ) \beta +\beta ^2}} z_{n-1}=0,  n=0,1,\ldots
}
\end{equation}
\small
\noindent
with associated characteristic equation
\tiny
\begin{equation}
\lambda^{2} + \frac{2 \beta  \left(-1+\alpha +\beta +\sqrt{(1+\alpha )^2+2 (-1+\alpha ) \beta +\beta ^2}\right)}{\left(1+\alpha +\beta +\sqrt{(1+\alpha )^2+2 (-1+\alpha ) \beta +\beta ^2}\right)^2} \lambda -\frac{2 \beta }{1+\alpha +\beta +\sqrt{(1+\alpha )^2+2 (-1+\alpha ) \beta +\beta ^2}} = 0.
\end{equation} \\

\addvspace{\bigskipamount}

\small
\begin{theorem}
The equilibriums $\bar{z}_{2}=\frac{1}{2} \left(-1+\alpha +\beta +\sqrt{1+2 \alpha +\alpha ^2-2 \beta +2 \alpha  \beta +\beta ^2}\right)$ of Eq.(\ref{equation:total-equationA}) is locally asymptotically stable if $$\abs{\frac{2 \beta  \left(-1+\alpha +\beta +\sqrt{(1+\alpha )^2+2 (-1+\alpha ) \beta +\beta ^2}\right)}{\left(1+\alpha +\beta +\sqrt{(1+\alpha )^2+2 (-1+\alpha ) \beta +\beta ^2}\right)^2}}+\abs{\frac{2 \beta }{1+\alpha +\beta +\sqrt{(1+\alpha )^2+2 (-1+\alpha ) \beta +\beta ^2}}}<1$$
\end{theorem}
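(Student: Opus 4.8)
The plan is to mirror exactly the argument already carried out for $\bar{z}_1$: derive the linearization of Eq.(\ref{equation:total-equationA}) about $\bar{z}_2$, read off its characteristic polynomial, and then invoke the sufficient condition for both characteristic roots to lie strictly inside the unit disk. Writing the right-hand side of Eq.(\ref{equation:total-equationA}) as $f(u,v)=\frac{\alpha+\alpha u+\beta v}{1+u}$, the linearized equation about an equilibrium $\bar{z}$ is $w_{n+1}-f_u(\bar z,\bar z)\,w_n-f_v(\bar z,\bar z)\,w_{n-1}=0$, so the whole proof reduces to computing the two partial derivatives at the equilibrium and checking that they reproduce Eq.(5).

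First I would compute $f_u=\frac{-\beta v}{(1+u)^2}$ and $f_v=\frac{\beta}{1+u}$, and evaluate them at $(\bar z_2,\bar z_2)$. Using $1+\bar z_2=\frac12\bigl(1+\alpha+\beta+\sqrt{(1+\alpha)^2+2(-1+\alpha)\beta+\beta^2}\bigr)$ together with $\bar z_2=\frac12\bigl(-1+\alpha+\beta+\sqrt{\cdots}\bigr)$, the coefficient of $\lambda$ in the characteristic equation becomes $-f_u(\bar z_2,\bar z_2)=\frac{2\beta(-1+\alpha+\beta+\sqrt{\cdots})}{(1+\alpha+\beta+\sqrt{\cdots})^2}$ and the constant term becomes $-f_v(\bar z_2,\bar z_2)=-\frac{2\beta}{1+\alpha+\beta+\sqrt{\cdots}}$. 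This is a routine simplification that confirms Eq.(5) is indeed the characteristic equation; I then denote its coefficient of $\lambda$ by $a_1$ and its constant term by $a_0$, so that the hypothesis of the theorem reads $\abs{a_1}+\abs{a_0}<1$.

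It remains to justify that $\abs{a_1}+\abs{a_0}<1$ forces every root of $\lambda^2+a_1\lambda+a_0=0$ to satisfy $\abs{\lambda}<1$, which is the content of Clark's theorem and yields local asymptotic stability. The one point that needs care—and the only genuine obstacle—is that here $a_1$ and $a_0$ are \emph{complex}, whereas Clark's theorem is most often stated for real coefficients. I would dispose of this by a direct modulus estimate rather than appealing to the real version: if some root had $\abs{\lambda}\ge 1$, then from $\lambda^2=-(a_1\lambda+a_0)$ we would obtain $\abs{\lambda}^2\le\abs{a_1}\abs{\lambda}+\abs{a_0}\le(\abs{a_1}+\abs{a_0})\abs{\lambda}$, hence $\abs{\lambda}\le\abs{a_1}+\abs{a_0}<1$, contradicting $\abs{\lambda}\ge 1$. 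Thus the sufficient condition is valid verbatim over $\mathbb{C}$, both roots of Eq.(5) lie inside the unit disk, and $\bar z_2$ is locally asymptotically stable whenever the displayed inequality holds, which is precisely the assertion of the theorem.
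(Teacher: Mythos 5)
Your proposal is correct, and it follows the same overall route as the paper — linearize about $\bar z_2$ and apply the Clark-type coefficient condition — but it is strictly more complete than the paper's own proof, which consists of a one-line appeal to Clark's theorem followed by a restatement of the displayed inequality. The paper neither verifies that Eq.(5) is the correct characteristic equation at $\bar z_2$ nor addresses the fact that Clark's theorem is usually stated for \emph{real} coefficients, whereas here $a_1$ and $a_0$ are complex; you do both. Your computation of $f_u=-\beta v/(1+u)^2$ and $f_v=\beta/(1+u)$ evaluated at $(\bar z_2,\bar z_2)$, using $1+\bar z_2=\frac{1}{2}\left(1+\alpha+\beta+\sqrt{(1+\alpha)^2+2(-1+\alpha)\beta+\beta^2}\right)$, reproduces exactly the coefficients in Eq.(5), and your direct estimate — if $\abs{\lambda}\ge 1$ then $\abs{\lambda}^2\le\abs{a_1}\abs{\lambda}+\abs{a_0}\le(\abs{a_1}+\abs{a_0})\abs{\lambda}$, forcing $\abs{\lambda}\le\abs{a_1}+\abs{a_0}<1$, a contradiction — is a valid Schur-type root bound that works verbatim over $\mathbb{C}$, so the sufficient condition is justified without importing the real-coefficient version of the lemma. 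This patches the one genuine subtlety the paper glosses over, and it makes the proof self-contained. The only step you leave implicit (as does the paper) is the passage from all characteristic roots lying in the open unit disk to local asymptotic stability of the \emph{nonlinear} equation: this is the standard linearized-stability theorem applied to the map $(u,v)\mapsto(f(u,v),u)$ near $(\bar z_2,\bar z_2)$, which is well defined and smooth there provided $1+\bar z_2\neq 0$ (implicit in the statement of the theorem, since otherwise the displayed expressions are undefined); one citing sentence to that effect would close the argument completely.
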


\begin{proof}
Proof the theorem follows from the \emph{Clark's theorem} of local asymptotic stability of the equilibriums. Hence by \emph{Clark's theorem}, the condition for the local asymptotic stability would be $$\abs{\frac{2 \beta  \left(-1+\alpha +\beta +\sqrt{(1+\alpha )^2+2 (-1+\alpha ) \beta +\beta ^2}\right)}{\left(1+\alpha +\beta +\sqrt{(1+\alpha )^2+2 (-1+\alpha ) \beta +\beta ^2}\right)^2}}+\abs{\frac{2 \beta }{1+\alpha +\beta +\sqrt{(1+\alpha )^2+2 (-1+\alpha ) \beta +\beta ^2}}}<1$$

\end{proof}

\noindent
It is noted that that the conditional inequality for local asymptotic stability of the equilibrium attains maximum $2.69519$ at $\alpha=-0.86278+0.446302i$ and $\beta=-0.0309069+0.749819i$. Also the minimum value attain by the inequality is $(3.211 \times 10^{-6})$ which is approximately $0$ at $\alpha=1-(6.4974 \times 10^{-8})$ and $\beta=(-3.02771 \times 10^{-6}) -(3.02772 \times 10^{-6})$.\\

\noindent
This numerical observation assures that there exist $\alpha$ and $\beta$ such that the inequality given in \emph{Theorem 2.1} condition holds good.

\subsubsection{Local Asymptotic Stability in the case $\alpha=\beta$}

When two parameters $\alpha$ and $\beta$ are equal then the difference equation Eq.(\ref{equation:total-equationA}) would reduce to

\begin{equation}
\label{equation:linearized-equation11}
\displaystyle{
z_{n+1} =\frac{\alpha (1+z_n+z_{n-1})}{1+z_{n}},  n=0,1,\ldots
}
\end{equation}

\noindent
The equilibriums $z_{\pm}$ of the equation Eq.(\ref{equation:linearized-equation11}) are $\frac{1}{2} \left(-1+2 \alpha -\sqrt{1+4 \alpha ^2}\right)$ and $\frac{1}{2} \left(-1+2 \alpha +\sqrt{1+4 \alpha ^2}\right)$ respectively.

\noindent
In the similar fashion as we did in the above case in section 2.1, the equilibrium $z_{+}= \frac{1}{2} \left(-1+2 \alpha -\sqrt{1+4 \alpha ^2}\right)$ of the difference equation Eq.(\ref{equation:linearized-equation11}) is locally asymptotically stable if $\abs{\frac{1+\sqrt{1+4 \alpha ^2}+\alpha  \left(1+2 \alpha +\sqrt{1+4 \alpha ^2}\right)}{2 \alpha }}+\abs{\frac{1}{2} \left(1+2 \alpha +\sqrt{1+4 \alpha ^2}\right)}<1$.\\

\noindent
But it is seen as before numerically that the minimum value attain by the inequality is greater than $1$. Therefore there does not exist an $\alpha$ so that the inequality does hold. So the equilibrium $z_{+}= \frac{1}{2} \left(-1+2 \alpha -\sqrt{1+4 \alpha ^2}\right)$ of the difference equation Eq.(\ref{equation:linearized-equation11}) is \emph{unstable}. \\

\noindent
Similarly, the equilibrium $z_{-} =\frac{1}{2} \left(-1+2 \alpha +\sqrt{1+4 \alpha ^2}\right)$ is locally asymptotically stable if $\abs{\frac{2 \alpha  \left(-1+2 \alpha +\sqrt{1+4 \alpha ^2}\right)}{\left(1+2 \alpha +\sqrt{1+4 \alpha ^2}\right)^2}}+\abs{\frac{1}{2}+\alpha -\frac{1}{2} \sqrt{1+4 \alpha ^2}}<1$.\\ \\

\noindent
It is observed numerically that the maximum value of the inequality is 1.15792 for $\alpha=-0.535769+-0.13703i$ and the minimum value is $4 \times 10^{-6}$ (close to zero) for $\alpha$ approximately equal to zero. This ensures that there exists $\alpha$ such that the inequality does hold good. So the local asymptotic stability at the $z_{-} =\frac{1}{2} \left(-1+2 \alpha +\sqrt{1+4 \alpha ^2}\right)$ is assured.\\

\noindent
Here are few example cases for the local asymptotic stability of the equilibriums of the Eq.(\ref{equation:linearized-equation11}).\\

\noindent
In the equation Eq.(\ref{equation:linearized-equation11}), for $\alpha=\beta=1+i$, the equilibriums are $\frac{1}{2} \left((1+2 i)-\sqrt{1+8 i}\right)$ and $\frac{1}{2} \left((1+2 i)+\sqrt{1+8 i}\right)$. For the equilibrium $\frac{1}{2} \left((1+2 i)-\sqrt{1+8 i}\right)$, the coefficients of the characteristic equation Eq. (3) are $\left(\frac{7}{4}+\frac{3 i}{4}\right)+\frac{1}{2} \sqrt{14+\frac{29 i}{2}}$ and $\frac{1}{2} \left((3+2 i)+\sqrt{1+8 i}\right)$ with modulus $4.14869$ and $3.21522$. Therefore the condition as stated in the \emph{Theorem 2.1} does not hold. Therefore the  equilibrium $\frac{1}{2} \left((1+2 i)-\sqrt{1+8 i}\right)$ is \emph{unstable}.\\

\noindent
In Eq.(\ref{equation:linearized-equation11}), when the parameters $\alpha$ and $\beta$ both equal and equal to $1+i$, then for the equilibrium $\frac{1}{2} \left((1+2 i)+\sqrt{1+8 i}\right)$, the coefficients of the characteristic equation Eq. (3) are $\left(\frac{7}{4}+\frac{3 i}{4}\right)-\frac{1}{2} \sqrt{14+\frac{29 i}{2}}$ and $\frac{1}{2} \left((3+2 i)-\sqrt{1+8 i}\right)$ with modulus $0.340882$ and $0.439849$. Therefore the condition as stated in the \emph{Theorem 2.1} holds good. Therefore the  equilibrium $\frac{1}{2} \left((1+2 i)+\sqrt{1+8 i}\right)$ is \emph{locally asymptotically stable}.\\

\section{Boundedness and Unbounded Solutions}
Here problem is to determine an open ball $B(0, \epsilon) \in \mathbb{C}$ such that if $z_n \in B(0, \epsilon)$ and $z_{n-1} \in B(0, \epsilon)$ then $z_{n+1} \in B(0, \epsilon)$ for all $n \geq 0$.

\begin{theorem}
 For the difference equation Eq.(\ref{equation:total-equationA}), for every $\epsilon >0$, if $z_n$ and $z_{n-1}$ $\in B(0, \epsilon)$ then $z_{n+1} \in B(0, \epsilon)$ provided $$\abs{\alpha}+\abs{\beta} \leq 1- \epsilon -\frac{\abs{\alpha}}{\epsilon}$$
\end{theorem}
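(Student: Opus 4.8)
The plan is to bound the modulus of the iterate directly from the recurrence Eq.(\ref{equation:total-equationA}), estimating its numerator from above and its denominator from below, and then to observe that the hypothesized parameter inequality is precisely what makes the resulting bound at most $\epsilon$. No structural machinery is needed: the whole statement reduces to the triangle inequality together with one algebraic rearrangement.

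First I would write $|z_{n+1}| = \dfrac{|\alpha + \alpha z_n + \beta z_{n-1}|}{|1+z_n|}$ and apply the triangle inequality to the numerator, using $|z_n|<\epsilon$ and $|z_{n-1}|<\epsilon$, to get
\[
|\alpha + \alpha z_n + \beta z_{n-1}| \le |\alpha| + |\alpha|\,|z_n| + |\beta|\,|z_{n-1}| \le |\alpha|(1+\epsilon) + |\beta|\epsilon .
\]
Next I would bound the denominator below by the reverse triangle inequality, $|1+z_n| \ge 1 - |z_n| > 1-\epsilon$, which is where the strict inequality and the tacit requirement $\epsilon<1$ enter. Combining the two estimates yields
\[
|z_{n+1}| < \frac{|\alpha|(1+\epsilon) + |\beta|\epsilon}{1-\epsilon}.
\]
It then remains to show that the right-hand side is $\le \epsilon$ exactly under the stated condition. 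Multiplying the target inequality by $1-\epsilon>0$ and dividing by $\epsilon$ turns $\frac{|\alpha|(1+\epsilon)+|\beta|\epsilon}{1-\epsilon}\le\epsilon$ into $|\alpha|+|\beta| \le 1-\epsilon-\frac{|\alpha|}{\epsilon}$, which is the hypothesis; hence $|z_{n+1}|<\epsilon$, i.e. $z_{n+1}\in B(0,\epsilon)$.

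The computation is elementary, so the only delicate point — and the step I would be careful to spell out — is the legitimacy of the estimate rather than any hard inequality. The denominator bound is vacuous unless $1-\epsilon>0$, and one must note that the hypothesis itself guarantees this: since $|\alpha|+|\beta|\ge 0$ and $|\alpha|/\epsilon\ge 0$, the assumption $|\alpha|+|\beta|\le 1-\epsilon-|\alpha|/\epsilon$ forces $1-\epsilon-|\alpha|/\epsilon\ge 0$, hence $\epsilon+|\alpha|/\epsilon\le 1$ and in particular $\epsilon\le 1$. I would also remark that strictness of the conclusion is automatic from $|1+z_n|>1-\epsilon$, so the non-strict hypothesis still places $z_{n+1}$ in the open ball $B(0,\epsilon)$, giving the claimed forward invariance.
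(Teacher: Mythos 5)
Your proof is correct and takes essentially the same route as the paper's: bound the numerator by the triangle inequality, bound the denominator below by $\abs{1+z_n} \ge 1-\abs{z_n} > 1-\epsilon$, and rearrange $\frac{\abs{\alpha}(1+\epsilon)+\abs{\beta}\epsilon}{1-\epsilon}\le\epsilon$ into the stated condition $\abs{\alpha}+\abs{\beta}\le 1-\epsilon-\frac{\abs{\alpha}}{\epsilon}$. If anything, your write-up is more careful than the paper's, which silently assumes $1-\epsilon>0$, whereas you observe that the hypothesis itself forces $\epsilon\le 1$ (indeed $\epsilon<1$ except in the trivial case $\alpha=\beta=0$) and that strictness of the conclusion comes from the denominator estimate.
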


\noindent

\begin{proof}
Let $\{z_{n}\}$ be a solution of the equation Eq.(\ref{equation:total-equationA}). Let $\epsilon>0$ be any arbitrary real number. Consider $z_n, z_{n-1} \in B(0, \epsilon)$. We need to find out an $\epsilon$ such that $z_{n+1}\in B(0, \epsilon)$ for all $n$. It is follows from the Eq.(\ref{equation:total-equationA}) that for any $\epsilon >0$, using Triangle inequality for $$\abs{z_{n+1}}=\abs{\frac{\alpha+\alpha z_n+\beta z_{n-1}}{1+z_{n}}} \leq \frac{\abs{\alpha}+\abs{\alpha}\epsilon+\abs{\beta}\epsilon}{1-\epsilon}$$ In order to ensure that $\abs{z_{n+1}}<\epsilon$, it is required to be $$\frac{\frac{\abs{\alpha}}{\epsilon}+\abs{\alpha}+\abs{\beta}}{1-\epsilon}<1$$ That is $\abs{\alpha}+\abs{\beta} \leq 1- \epsilon -\frac{\abs{\alpha}}{\epsilon}$. Hence it is proved.

\end{proof}

\noindent
In contrast, we found many parameters $\alpha$ and $\beta$ where the solutions are unbounded for any initial values. A brief list is given in the following Table 1.

\begin{table}[H]

\begin{tabular}{| m{3cm} || m{5cm} || m{5cm} |}
\hline
\centering  \textbf{Serial Number} &
\begin{center}
\textbf{Parameters} $\alpha$, $\beta$
\end{center}
 & \textbf{Condition:} $\abs{\beta} > \abs{\alpha+1}$ \\
\hline
\centering  1 &
\begin{center}
$\alpha=0.09594 + 0.06016i$, $\beta=0.81950 + 0.77147i$
\end{center} & $\abs{\alpha+1}=1.0976$ \& $\abs{\beta}=1.1255$\\
\hline
\centering  2 &
\begin{center}
$\alpha=0.02817 + 0.70953i$, $\beta=0.90515 + 0.86582i$
\end{center} & $\abs{\alpha+1}=1.2492$ \& $\abs{\beta}=1.2526$\\
\hline
\centering  3 &
\begin{center}
$\alpha=0.10288 + 0.23615i$, $\beta=0.97178 + 0.69889i$
\end{center} & $\abs{\alpha+1}=1.1279$ \& $\abs{\beta}=1.1970$ \\
\hline
\centering  4 &
\begin{center}
$\alpha=0.04069 + 0.32936i$, $\beta=0.70934 + 0.8603i$
\end{center} & $\abs{\alpha+1}=1.0916$ \& $\abs{\beta}=1.1150$\\
\hline
\centering  5 &
\begin{center}
$\alpha=0.09663 + 0.26539i$, $\beta=0.791945 + 0.93686i$
\end{center} & $\abs{\alpha+1}=1.1283$ \& $\abs{\beta}= 1.2267$\\
\hline
\centering  6 &
\begin{center}
$\alpha=0.17020 + 0.30234i$, $\beta=0.96143 + 0.7836i$
\end{center} & $\abs{\alpha+1}=1.2086$ \& $\abs{\beta}= 1.2403$\\
\hline
\centering  7 &
\begin{center}
$\alpha=0.24314 + 0.15415i $, $\beta=0.95641 + 0.9356i$
\end{center} & $\abs{\alpha+1}=1.2527$ \& $\abs{\beta}=1.3379$\\
\hline
\centering  8 &
\begin{center}
$\alpha=0.78962 + 0.79918i$, $\beta=2\alpha$
\end{center} &  $\abs{\alpha+1}=1.9600$ \& $\abs{\beta}= 2.2469$\\
\hline
\centering  9 &
\begin{center}
$\alpha=40+33i$, $\beta=27+77i$
\end{center} & $\abs{\alpha+1}=52.6308$ \& $\abs{\beta}=81.5966$\\
\hline
\centering  10 &
\begin{center}
$\alpha=60+4i$, $\beta=89+86i$
\end{center} & $\abs{\alpha+1}=61.1310$ \& $\abs{\beta}= 123.7619$\\
\hline

\end{tabular}
\caption{Parameters $\alpha$ and $\beta$ for which solutions possess unbounded solutions of Eq.(\ref{equation:total-equationA}) for arbitrary initial values.}
\label{Table:}
\end{table}
\noindent
Here in the Table 1, the condition $\abs{\beta} > \abs{\alpha+1}$ is satisfied for all the ten cases. Therefore it is verified numerically that the similar result to the real set up in the case of unboundedness of solutions of the equation Eq.(\ref{equation:total-equationA}) in the complex set-up is followed also. The following figure includes the orbit plots of the four unbounded solutions of the Eq.(\ref{equation:total-equationA}) for which the parameters are given in the Table 1 in four rows from the serial number 1 to 4. \\

\begin{figure}[H]
      \centering

      \resizebox{12cm}{!}
      {
      \begin{tabular}{c c}
      \includegraphics [scale=5]{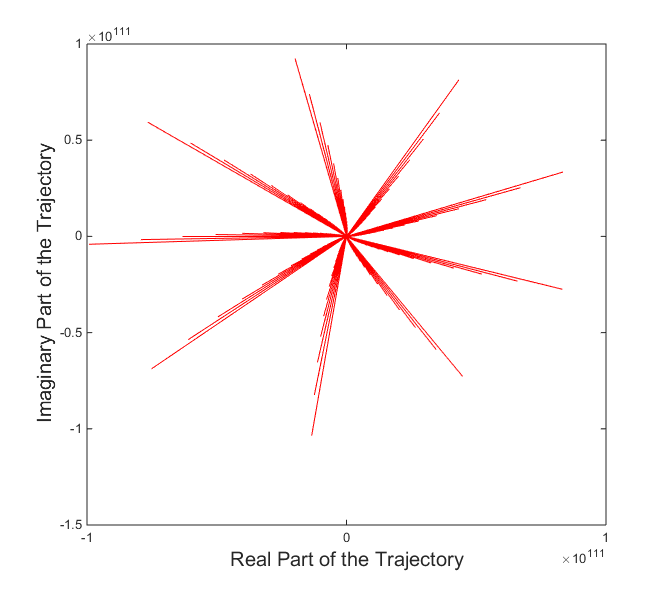}
      \includegraphics [scale=5]{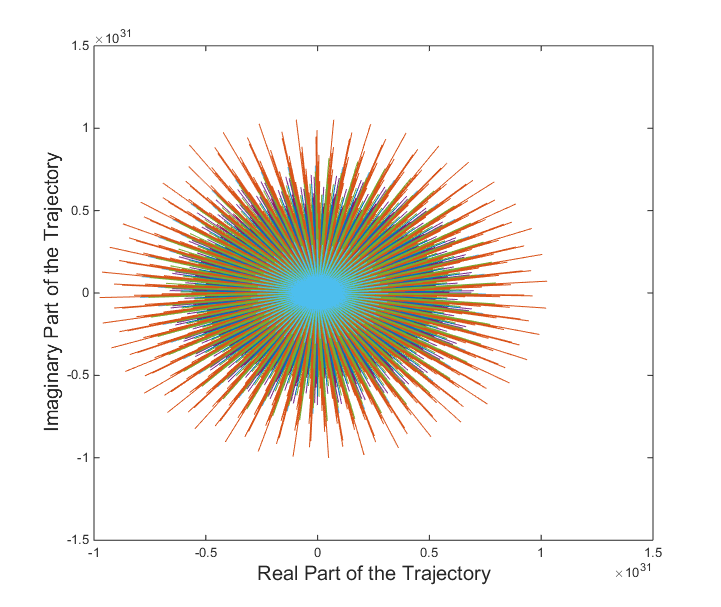}\\
      \includegraphics [scale=5]{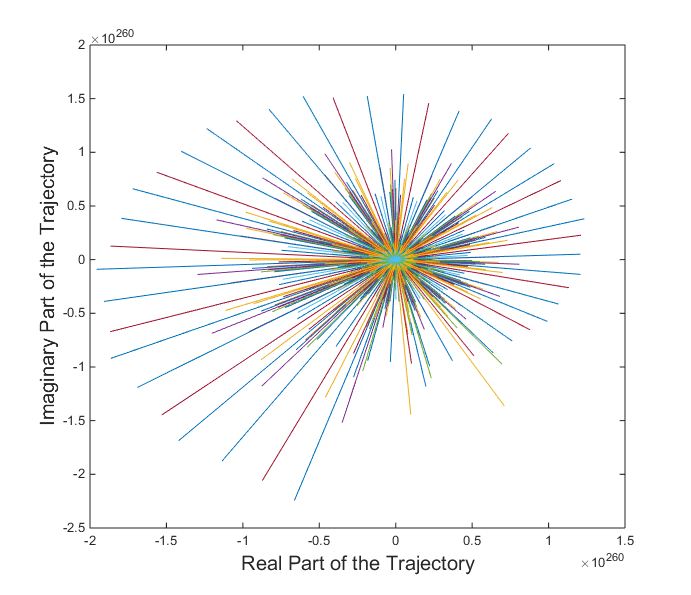}
      \includegraphics [scale=5]{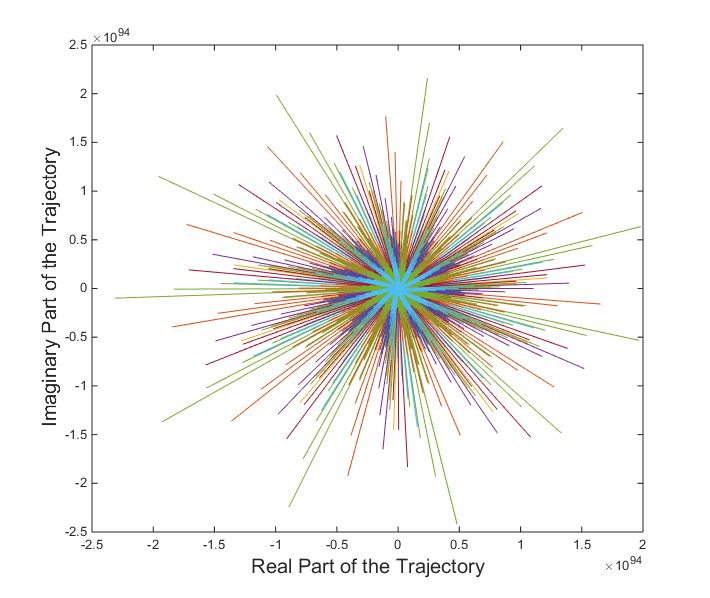}\\
\end{tabular}
      }
\caption{Unbounded Orbit for the Parameters (Serial 1-4) as given in Table.1, Serial:1-Left upper most, Serial2:Right upper most, Serial:3-Left lower most and Serial:4- Right lower most.}
      \begin{center}

      \end{center}
      \end{figure}
\noindent
In the Fig. 1, the unbounded solution for the specified $\alpha$, $\beta$ with any 100 initial values (one initial value for Serial-1) are plotted. It is seen that all the solutions in these four cases are unbounded. \\
\section{Periodic of Solutions}

\label{section:periodicity}

\noindent
We shall first look for the prime period two solutions of the three difference equation Eq.(\ref{equation:total-equationA}) and their corresponding local stability analysis.\\

\noindent
Let $\ldots, \phi ,~\psi , ~\phi , ~\psi ,\ldots$, $\phi  \neq \psi $ be a prime period two solution of the difference equation Eq.(\ref{equation:total-equationA}). Then $\phi=\frac{\alpha+\alpha\phi+\beta\psi}{1+\psi}$ and $\psi=\frac{\alpha+\alpha\psi+\beta\phi}{1+\phi}$. By solving these two equations we get two equilibriums only. That is, there is no prime period solution of the difference equation Eq.(\ref{equation:total-equationA}) other than the equilibriums which we found earlier.\\
But in case $\beta-\alpha=1$, there are prime period two solutions exist for the equation Eq.(\ref{equation:total-equationA}). A list is given for the computational evidence.

\begin{table}[H]

\begin{tabular}{| m{4cm} || m{5cm} || m{5cm}|}
\hline
\centering   \textbf{Parameters}: $\alpha$, $\beta$ & \textbf{Initial Values} &
\begin{center}
\textbf{Periodic Solutions}
\end{center}\\
\hline
\centering $\alpha=0.8407+ 0.2542i$, $\beta=\alpha+1$ &
\begin{center}
$z_0=55 + 14i$ $z_1=15+26i$
\end{center} & 0.8648 + 0.1258i, -2.191 + 12.3691i\\
\hline
\centering $\alpha=0.1966 + 0.2511i$, $\beta=\alpha+1$ &
\begin{center}
$z_0=82 + 24i$ $z_1=93+25i$
\end{center} & 0.2021 + 0.2559i, 48.7958 + 20.6764i \\
\hline
\centering $\alpha=62+47i$, $\beta=\alpha+1$ &
\begin{center}
$z_0=62 + 47i$ $z_1=35+83i$
\end{center} & 0.6105 + 0.5405i, 7.2939 + 50.1430i  \\
\hline
\centering $\alpha=0.3804 + 0.5678i$, $\beta=\alpha+1$ &
\begin{center}
$z_0=92 + 28i$ $z_1=76+76i$
\end{center} & 0.3970 + 0.5736i, 30.1391 + 50.3749i\\
\hline

\end{tabular}
\caption{For $\beta=\alpha+1$, Prime Period 2 solutions of the equation Eq.(\ref{equation:total-equationA}) for different choice of parameters and initial values.}
\label{Table:}
\end{table}
\noindent
Therefore the solution of the difference equation Eq.(\ref{equation:total-equationA}) converges to the periodic solution of period 2 in the case of $\beta=\alpha+1$ that is $\abs{\beta}=\abs{\alpha+1}$ is holding well which was exactly same condition in the case of real parameters.\\

\noindent
Let us now establish the fact by proving the following theorem.\\

\begin{theorem}
For $\beta=\alpha+1$, the solution of the difference equation Eq.(\ref{equation:total-equationA}) converges to the periodic solution of period 2.
\end{theorem}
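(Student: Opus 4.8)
The plan is to exploit the algebraic collapse that occurs precisely when $\beta=\alpha+1$. First I would rewrite Eq.(\ref{equation:total-equationA}) in this case as $z_{n+1}-\alpha=\beta\,\frac{z_{n-1}}{1+z_{n}}$, equivalently $z_{n+1}+1=\beta\,\frac{1+z_{n}+z_{n-1}}{1+z_{n}}$, and then pass to the shifted variable $w_{n}=z_{n}-\alpha$, in which (using $1+z_{n}=w_{n}+\beta$) the recurrence becomes $w_{n+1}=\beta\,\frac{w_{n-1}+\alpha}{w_{n}+\beta}$. The shift is motivated by the period-two analysis already begun in the text: subtracting the two period-two equations for $\phi\neq\psi$ forces $\beta=\alpha+1$, while adding them gives $\phi\psi=\alpha(1+\phi+\psi)$, i.e. the single constraint $(\phi-\alpha)(\psi-\alpha)=\alpha\beta$. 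Thus the period-two solutions form a one-parameter family, cut out by the level set $w_{n}w_{n-1}=\alpha\beta$, and it is natural to measure how far an orbit sits from this set.

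Accordingly I would track the deviation $K_{n}:=(z_{n}-\alpha)(z_{n-1}-\alpha)-\alpha\beta=w_{n}w_{n-1}-\alpha\beta$. The central computation, which I would carry out first in full, is to substitute $w_{n+1}=\beta\,\frac{w_{n-1}+\alpha}{w_{n}+\beta}$ into $K_{n+1}=w_{n+1}w_{n}-\alpha\beta$ and simplify; the cross terms cancel and one is left with the clean multiplicative law
\[
K_{n+1}=\frac{\beta}{1+z_{n}}\,K_{n},\qquad\text{equivalently}\qquad w_{n+1}-w_{n-1}=\frac{-K_{n}}{1+z_{n}}.
\]
Hence $K_{n}=K_{1}\prod_{k=1}^{n-1}\frac{\beta}{1+z_{k}}$, and the whole theorem is reduced to two assertions: that $K_{n}\to0$, and that each parity-class subsequence converges.

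Granting $K_{n}\to0$, the convergence is routine. From $w_{n+1}-w_{n-1}=-K_{n}/(1+z_{n})$, the even- and odd-indexed subsequences have telescoping increments with moduli $\abs{K_{n}}/\abs{1+z_{n}}$, which are summable once $K_{n}$ decays geometrically; hence $z_{2m}\to\psi$ and $z_{2m+1}\to\phi$ for some $\phi,\psi$. Passing to the limit in $K_{n}\to0$ yields $(\phi-\alpha)(\psi-\alpha)=\alpha\beta$, which by the computation above is exactly the condition for $\{\phi,\psi\}$ to be a period-two solution, and taking limits in the recurrence confirms that the pair is invariant. So every solution converges to a period-two solution, as claimed.

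The genuinely hard part is establishing $K_{n}\to0$, that is, that $\prod_{k}\abs{\beta/(1+z_{k})}\to0$. This demands an a priori control of the orbit: one must show the iterates stay bounded and that $\abs{1+z_{k}}$ does not collapse toward $\abs{\beta}$, so that the factors $\abs{\beta/(1+z_{k})}$ are eventually dominated by some $\rho<1$. In the positive real setting the sign structure supplies this automatically (it keeps $1+z_{k}>1$); for arbitrary complex $\alpha,\beta$ no such bound is free, and indeed the transverse multiplier of the period-two curve, obtained from the Jacobian of the second iterate, equals $\frac{\beta}{1+\phi+\psi}$, whose modulus need not lie below $1$ along the whole family. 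I would therefore expect a rigorous statement to require a boundedness hypothesis of the kind established in Section 3 (or an explicit restriction on $\alpha,\beta$), with the contraction estimate on $\abs{\beta/(1+z_{k})}$ being the crux; the unconditional claim, as written, rests on the computational evidence of Table 2.
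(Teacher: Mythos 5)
Your proposal is, up to the affine change of variable $w_n=z_n-\alpha$, exactly the paper's own argument: your invariant $K_n=w_nw_{n-1}-\alpha\beta$ equals $-\mathfrak{J}_n$ with $\mathfrak{J}_n=\alpha+\alpha(z_n+z_{n-1})-z_nz_{n-1}$, your multiplicative law $K_{n+1}=\frac{\beta}{1+z_n}K_n$ and difference identity $z_{n+1}-z_{n-1}=-K_n/(1+z_n)$ are the paper's Eqs.~(\ref{sa1}) and (\ref{sa2}), and your telescoping product is its Eq.~(\ref{sa4}). The crux you flag as unproved --- showing $\prod_k\abs{\beta/(1+z_k)}\to 0$, which requires boundedness of the orbit and an eventual uniform bound $\abs{\beta/(1+z_k)}\le\rho<1$ --- is precisely the step the paper also leaves unestablished (its proof concludes only ``provided the solution is bounded,'' and its claim that $\mathfrak{J}_n$ ``remains constant along each solution'' is inconsistent with its own Eq.~(\ref{sa1})), so your conditional, honestly hedged version is in fact more careful than the published proof rather than missing anything it contains.
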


\begin{proof}
When $\beta=\alpha+1$, then all prime period 2 solutions of Eq.(\ref{equation:total-equationA}) $\dots, \phi, \psi, \phi, \psi, \dots$ are the solution of the equation

\begin{equation}\label{sa}
  \alpha+\alpha (\phi+\psi)-\phi\psi=0
\end{equation}
\noindent
Keeping in mind the equation Eq. (\ref{sa}) we set $$\mathfrak{J}_n=\alpha+\alpha(\phi+\psi)-\phi\psi$$ for $n>0$.\\

\noindent
The following identities are holding well for $n >0$

\begin{equation}\label{sa1}
\mathfrak{J}_{n+1}=\frac{\alpha+1}{1+z_n}\mathfrak{J}_n
\end{equation}

\begin{equation}\label{sa2}
z_{n+1}-z_{n-1}=\frac{\mathfrak{J}_n}{1+z_n}
\end{equation}

\noindent
and for $n \geq 1$
\begin{equation}\label{sa3}
z_{n+1}-z_{n-1}=\frac{\alpha+1}{1+z_n} (z_{n}-z_{n-2})
\end{equation}

\noindent
and therefore, for all $n \geq 1$
\begin{equation}\label{sa4}
z_{n+1}-z_{n-1}=(z_1-z_{-1})\prod_{k=1}^{n} \frac{\alpha+1}{1+z_k}
\end{equation}

\noindent
From \ref{sa1}, it is seen that $\mathfrak{J}_n$ remains constant along each solution and so it is now following from \ref{sa2} that for each solution $\{z_n\}_n$ of the equation  Eq.(\ref{equation:total-equationA}), exactly one of the following two conditions would be true. \\
\noindent
Either $$\abs{z_{n+1}-z_{n-1}}=0$$ or $$\abs{z_{n+1}-z_{n-1}} \neq 0$$

\noindent
If the the either condition holds then the proof is immediate. If the other condition holds, then solution would converges to the period $2$ solutions provided the solution is bounded.\\
\noindent
Hence the theorem is proved.\\
\end{proof}

\noindent
For $100$ different initial values and for different parameters, solutions are generated and corresponding periodic plot of $50$ iterations are plotted in the Fig.2. Different colors exhibits different periodic trajectories. From the figure it is evident that all $100$ solutions are periodic and of period 2. This observation led to make the following conjecture which is a result in the case of real parameters and initial values.

\begin{figure}[H]
      \centering

      \resizebox{10cm}{!}
      {
      \begin{tabular}{c}
      \includegraphics [scale=15]{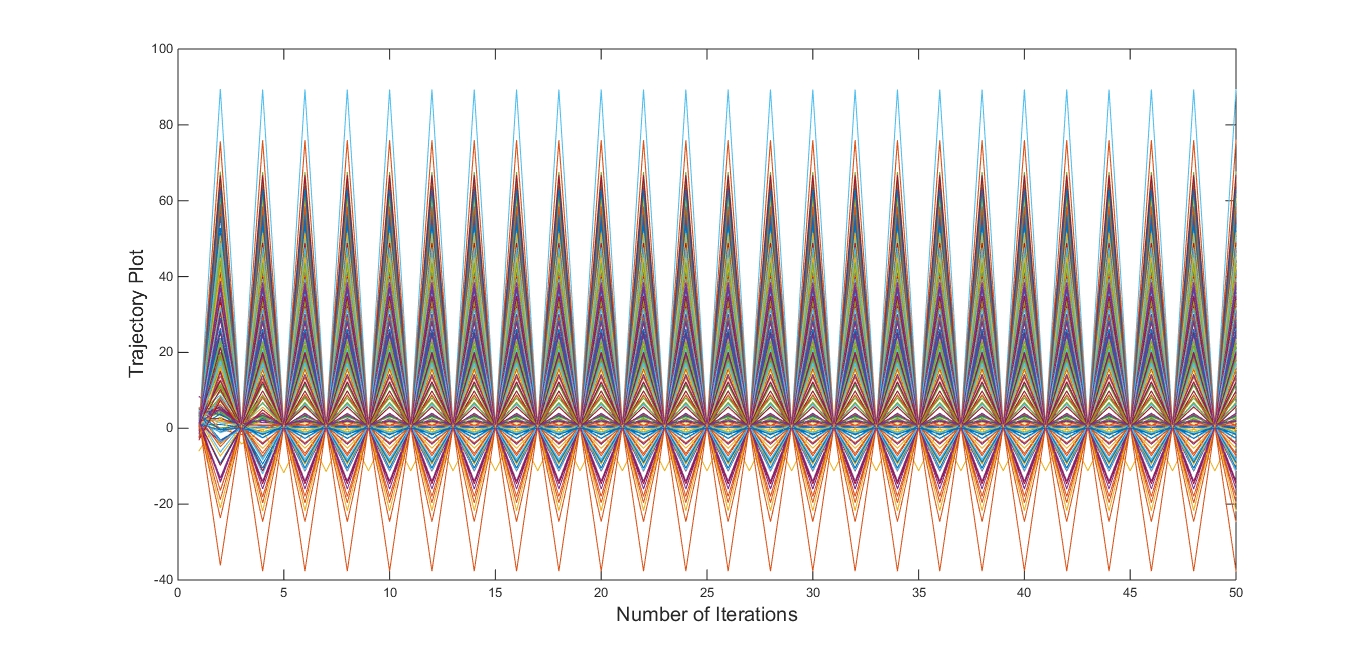}
      \end{tabular}
      }
\caption{Prime Period 2 Solutions for difference parameters and initial values}
      \begin{center}

      \end{center}
      \end{figure}

\begin{conjecture}
All solutions are periodic of period $2$ of the difference equation Eq.(\ref{equation:total-equationA}) for any initial values and for any values $\alpha$ and $\beta$ satisfying $\beta=\alpha+1$.
\end{conjecture}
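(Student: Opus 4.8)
The plan is to take the identities assembled in the proof just completed as the starting point and to read the conjecture literally: a solution is periodic of period $2$ precisely when $z_{n+1}=z_{n-1}$ for every $n\ge 0$. The quantity $\mathfrak{J}_n=\alpha+\alpha(z_n+z_{n-1})-z_nz_{n-1}$, together with the recursions \eqref{sa1}--\eqref{sa4}, gives complete control over the sequence of differences $z_{n+1}-z_{n-1}$, so the entire question reduces to deciding when all of these differences vanish.

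First I would record the correct invariance statement coming from \eqref{sa1}. The relation $\mathfrak{J}_{n+1}=\frac{\alpha+1}{1+z_n}\mathfrak{J}_n$ does \emph{not} make $\mathfrak{J}_n$ constant (it is constant only in the degenerate case $z_k\equiv\alpha$); what it guarantees is that the \emph{vanishing} of $\mathfrak{J}_n$ is preserved, i.e. $\mathfrak{J}_0=0$ forces $\mathfrak{J}_n=0$ for all $n$. Next, \eqref{sa2} identifies this with periodicity: $\mathfrak{J}_n=0$ is equivalent to $z_{n+1}=z_{n-1}$. Chaining the two observations, any initial pair $(z_{-1},z_0)$ lying on the curve
\[
\alpha+\alpha(z_{-1}+z_0)-z_{-1}z_0=0
\]
produces an orbit that is \emph{exactly} of period $2$ from the outset, and conversely.

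The step I expect to be the genuine obstacle is forcing the conclusion for \emph{arbitrary} initial data, and here the product formula \eqref{sa4} is decisive rather than merely technical. It exhibits
\[
z_{n+1}-z_{n-1}=(z_1-z_{-1})\prod_{k=1}^{n}\frac{\alpha+1}{1+z_k},
\]
in which every factor $\frac{\alpha+1}{1+z_k}$ is nonzero whenever $\beta=\alpha+1\neq 0$ and no iterate hits $-1$. A finite product of nonzero numbers cannot vanish, so $z_{n+1}-z_{n-1}=0$ for all $n$ is equivalent to the single condition $z_1=z_{-1}$, that is, to $\mathfrak{J}_0=0$. For initial data off the curve above one has $z_{n+1}-z_{n-1}\neq 0$ for \emph{every} $n$, and the orbit is provably not of period $2$.

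Consequently the obstacle is structural, not computational: the hypothesis ``for any initial values'' is incompatible with exact period $2$. The honest repair --- and the statement the machinery above actually delivers, in agreement with the preceding theorem and the numerics of Table~2 --- is either to restrict to initial conditions satisfying $\mathfrak{J}_0=0$, in which case period $2$ holds on the nose, or to replace ``are periodic of period $2$'' by ``converge to a period-$2$ solution.'' My plan would therefore be to prove the sharp dichotomy just described and to flag the conjecture as correct only in this amended form.
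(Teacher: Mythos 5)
You were asked about a \emph{conjecture}: the paper offers no proof of this statement at all, only the numerical evidence of Fig.~2 and Table~2, so there is no authorial argument to compare yours against line by line. Measured on its own terms, your analysis is correct and is in fact sharper than anything in the paper. The identities \eqref{sa1}--\eqref{sa4} do hold when $\beta=\alpha+1$ (with $\mathfrak{J}_n=\alpha+\alpha(z_n+z_{n-1})-z_nz_{n-1}$, a direct computation gives $\mathfrak{J}_{n+1}=\frac{\alpha+1}{1+z_n}\mathfrak{J}_n$), and you are right that the paper's inference in the proof of Theorem~4.1 --- that \eqref{sa1} makes $\mathfrak{J}_n$ ``remain constant along each solution'' --- is simply false in general; what \eqref{sa1} propagates is the \emph{vanishing} of $\mathfrak{J}_n$. (Your parenthetical that constancy forces $z_k\equiv\alpha$ overlooks the trivial constant case $\mathfrak{J}_0=0$, but your next sentence handles that case separately, so nothing is lost.) Your use of \eqref{sa4} is the decisive step and it is airtight: with $\beta=\alpha+1\neq 0$ and all $z_k\neq -1$ (otherwise the orbit exits the domain of the map), every factor $\frac{\alpha+1}{1+z_k}$ is nonzero, so $z_{n+1}-z_{n-1}$ vanishes for one $n$ iff it vanishes for all $n$ iff $z_1=z_{-1}$, which by \eqref{sa2} at $n=0$ is exactly $\mathfrak{J}_0=0$. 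Hence exact period two holds precisely for initial data on the quadric $\alpha+\alpha(z_{-1}+z_0)-z_{-1}z_0=0$, and off it the orbit is never --- not even eventually --- of period two. The conjecture as literally stated (``all solutions, any initial values'') is therefore false, and your refutation uses only the paper's own machinery.

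Your proposed repair is also the right one and is corroborated by the paper itself: the real-parameter trichotomy quoted in the introduction asserts \emph{convergence} to a period-two solution when $\beta=\alpha+1$, not exact periodicity, and the initial values in Table~2 visibly do not satisfy $\mathfrak{J}_0=0$ --- the tabulated pairs are limits of the orbits, not the orbits themselves; the apparent periodicity in Fig.~2 is fast convergence. One caveat before you declare the amended statement proved: the convergence half is not delivered by the identities alone. You need $z_{n+1}-z_{n-1}\to 0$, i.e.\ the product $\prod_{k=1}^{n}\frac{\alpha+1}{1+z_k}\to 0$, together with convergence of the even and odd subsequences, and even the paper's proof of Theorem~4.1 concedes this only ``provided the solution is bounded'' --- a hypothesis established nowhere in the complex setting. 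So prove your dichotomy unconditionally, but state the convergence assertion as conditional on boundedness (or supply a boundedness argument); that is everything the machinery actually supports.
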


\noindent
It is also confirmed from computational evidence that there are many higher order periodic solutions exist of the difference equation Eq.(\ref{equation:total-equationA}). Some of the computational observations are stated here.

\begin{table}[H]

\begin{tabular}{| m{4cm} || m{4.5cm} || m{1.5cm} || m{4cm}|}
\hline
\centering   \textbf{Parameters}: $\alpha$, $\beta$ &
\begin{center}
\textbf{Initial Values}
\end{center}
 &
\begin{center}
\textbf{Period}
\end{center} & \textbf{Condition:} $\abs{\beta} < \abs{\alpha+1}$\\
\hline
\centering $\alpha=0.0098 + 0.5323i$, $\beta=0.2794+ 0.9462i$ &
\begin{center}
$z_0=-0.5938 - 0.3212i$ $z_1=-1.2230 + 1.7184i$
\end{center} &
\begin{center}
7
\end{center} & $\abs{\alpha+1}=1.1415$ \& $\abs{\beta}=0.9866$
\\
\hline
\centering $\alpha=0.2021 + 0.4539i $, $\beta=0.4280 + 0.9660i$ &
\begin{center}
$z_0=-0.6653+ 2.800i$ $z_1=0.2991 + 0.6178i $
\end{center} &
\begin{center}
9
\end{center} & $\abs{\alpha+1}= 1.2849$ \& $\abs{\beta}=1.0566$
 \\
\hline
\centering $\alpha=89+55i$, $\beta=32+90i$ &
\begin{center}
$z_0=0.01272 + 0.6399i$ $z_1=-0.06293 + 0.02114i$
\end{center} &
\begin{center}
13
\end{center} & $\abs{\alpha+1}=105.4751$ \& $\abs{\beta}=95.5196$
  \\
\hline
\centering $\alpha=0.2776 + 0.65251i$, $\beta=\alpha-1$ &
\begin{center}
$z_0=-0.5909 + 3.2147i$ $z_1=0.02654 + 0.4106i$
\end{center} &
\begin{center}
36
\end{center} & $\abs{\alpha+1}=1.4346$ \& $\abs{\beta}=0.9735$
\\
\hline
\centering  $\alpha=0.0524+0.3234i$, $\beta=0.7996+ 0.6302i$ &
\begin{center}
$z_0=0.4480 + 0.2593i;$ $z_1=-1.03264 + 0.7212i;
$
\end{center} &
\begin{center}
40
\end{center} & $\abs{\alpha+1}=1.1010$ \& $\abs{\beta}= 1.0181$
\\
\hline

\end{tabular}
\caption{Higher order periodicities of the equation Eq.(\ref{equation:total-equationA}) for different choice of parameters and initial values.}
\label{Table:}
\end{table}
\noindent
It is observed that for any initial values and for the parameters $\alpha=(89, 55)$ and $\beta=(32, 90)$ all solutions are periodic and of period 13. The corresponding sequence trajectory and complex phase space plots are given in Fig. 3. The condition $\abs{\beta}< \abs{\alpha+1}$ is satisfied for all the cases in the Table 3. This confirms that the similar result in the context of real parameters and initial values is holding well in the complex set-up too.

\begin{figure}[H]
      \centering

      \resizebox{12cm}{!}
      {
      \begin{tabular}{c}
      \includegraphics [scale=5]{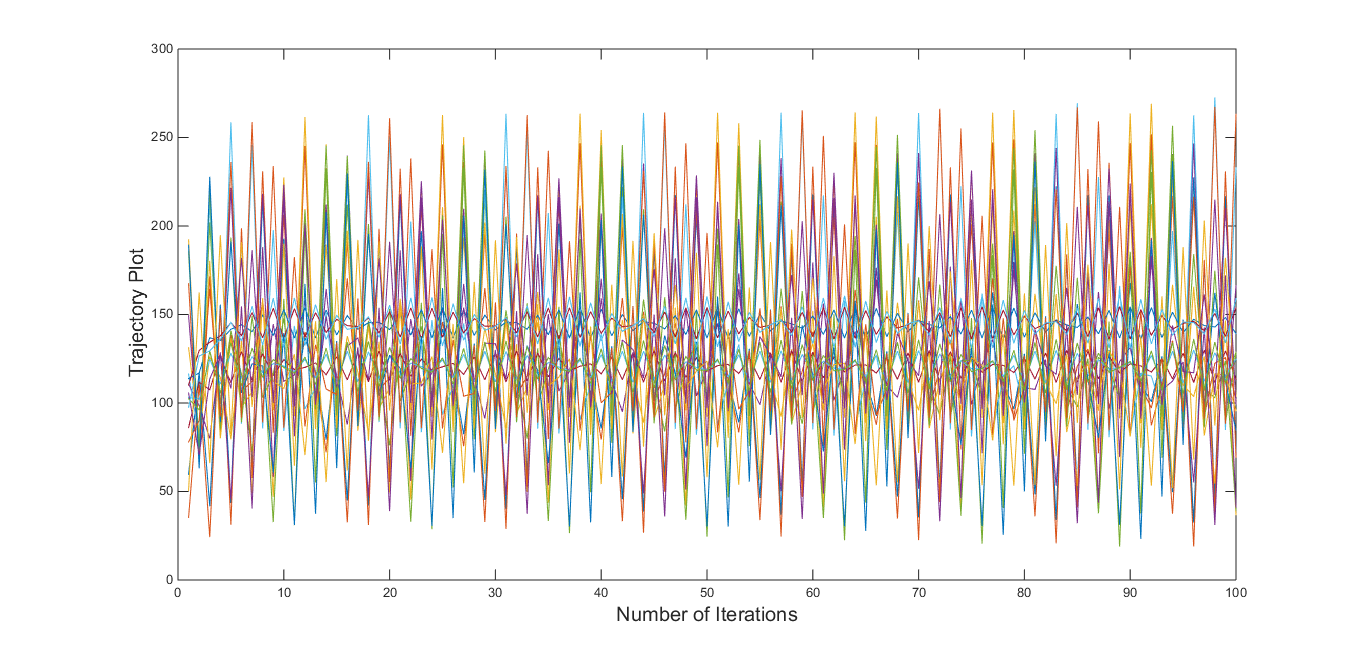}\\
      \includegraphics [scale=4]{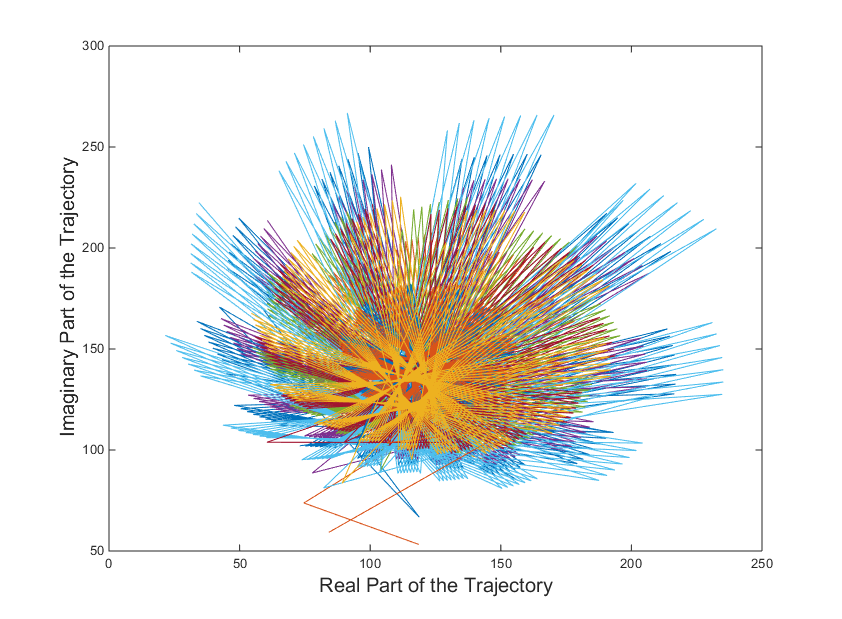}
\end{tabular}
      }
\caption{Periodic Trajectory and Orbit of Period 13.}
      \begin{center}

      \end{center}
      \end{figure}
\noindent
Here we consider 100 different initial values with the specified parameters $\alpha=(89, 55)$ and $\beta=(32, 90)$ and it is seen that all the solutions are periodic and of period 13. Different colors represent different trajectories(orbit) for different initial values in Fig.3.\\ \\

\noindent
The periodic trajectory and its corresponding orbit of period $36$ are given in the Fig.4 for the parameters $\alpha=0.2776 + 0.65251i$, $\beta=\alpha-1$.

\begin{figure}[H]
      \centering

      \resizebox{12cm}{!}
      {
      \begin{tabular}{c}
      \includegraphics [scale=5]{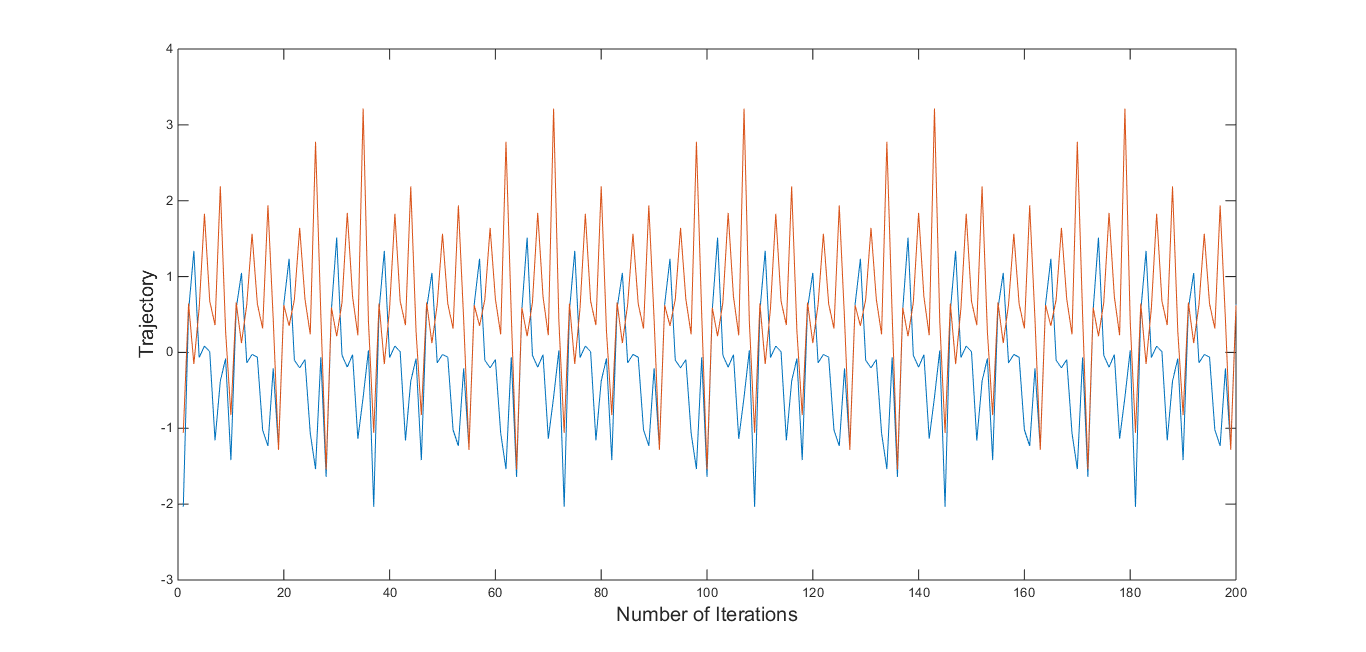}\\
      \includegraphics [scale=4]{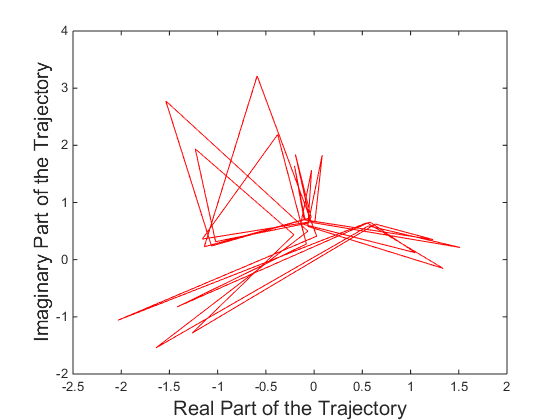}
\end{tabular}
      }
\caption{Periodic Trajectory and Orbit of Period 36.}
      \begin{center}

      \end{center}
      \end{figure}
\noindent
The periodic trajectory and its corresponding orbit  of period $40$ are given in the Fig.5 for the parameters $\alpha=0.05244 + 0.3234i$, $\beta=0.7996 + 0.6302i$.

\begin{figure}[H]
      \centering

      \resizebox{12cm}{!}
      {
      \begin{tabular}{c}
      \includegraphics [scale=5]{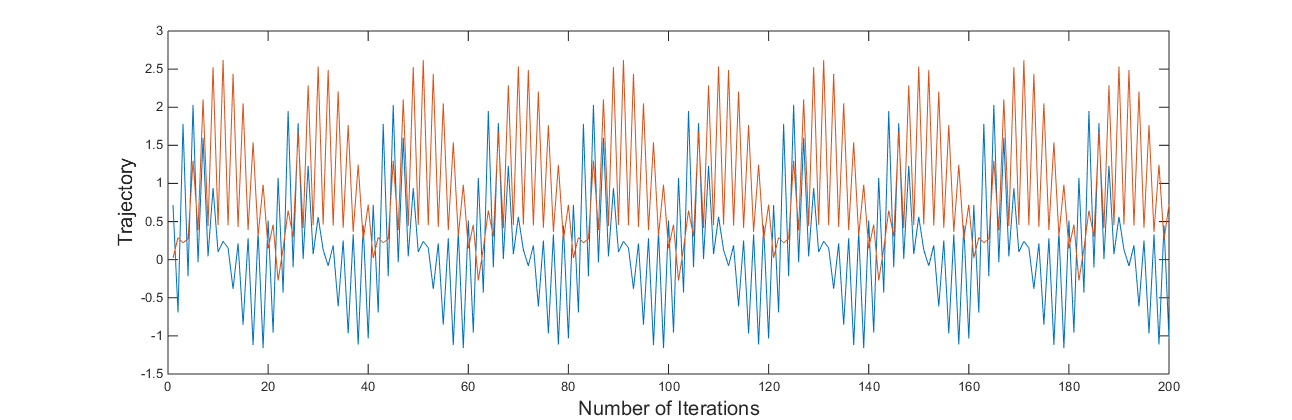}\\
      \includegraphics [scale=4]{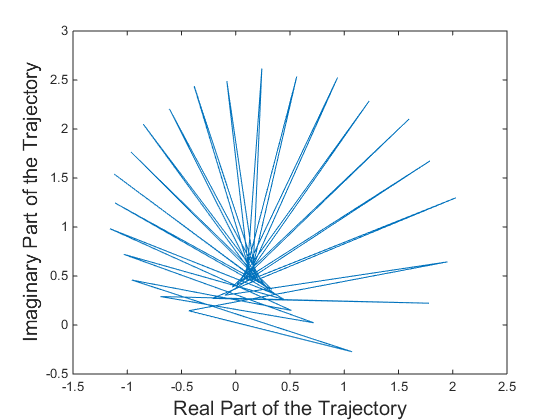}
\end{tabular}
      }
\caption{Periodic Trajectory and Orbit of Period 40.}
      \begin{center}

      \end{center}
      \end{figure}

\section{Chaotic Solutions}
This is something which is absolutely new feature of the dynamics of the difference equation Eq.(\ref{equation:total-equationA}) which did not arise in the real set up of the same difference equation. Computationally we have encountered some chaotic solution of the difference equation Eq.(\ref{equation:total-equationA}) for different parameters which are given in the following Table 4. \\
\noindent
The method of Lyapunov characteristic exponents serves as a useful tool to quantify chaos. Specifically Lyapunav exponents measure the rates of convergence or divergence of nearby trajectories. Negative Lyapunov exponents indicate convergence, while positive Lyapunov exponents demonstrate divergence and chaos. The magnitude of the Lyapunov exponent is an indicator of the time scale on which chaotic behavior can be predicted or transients decay for the positive and negative exponent cases respectively. In this present study, the largest Lyapunov exponent is calculated for a given solution of finite length numerically \cite{Wolf}.\\
\noindent
From computational evidence, it is arguable that for complex parameters $\alpha$ and $\beta$ which are stated in the following table the solutions are chaotic for every initial values.

\begin{table}[H]

\begin{tabular}{| m{5cm} || m{3cm} || m{6.5cm} |}
\hline
\centering   \textbf{Parameters} $\alpha$, $\beta$ &
\begin{center}
\textbf{Lyapunav exponent}
\end{center} &
\begin{center}
\textbf{$\abs{\alpha+1}$ $>$ $\abs{\beta}$}
\end{center}
\\
\hline
\centering $\alpha=0.2278 + 0.3210i$, $\beta=0.82956 + 0.8221i
$&
\begin{center}
$0.47153$
\end{center} & $\abs{\alpha+1}=1.2691$ \& $\abs{\beta}=1.1680$\\
\hline
\centering $\alpha=0.01365 + 0.37406i$, $\beta=0.92268 + 0.5464i$ &
\begin{center}
$1.062$
\end{center} & $\abs{\alpha+1}=1.0805$ \& $\abs{\beta}=1.0724$\\
\hline
\centering $\alpha=0.3377 + 0.2361i$, $\beta=0.3178 + 0.9844i$ &
\begin{center}
$0.6256$
\end{center} & $\abs{\alpha+1}=1.3584$ \& $\abs{\beta}=1.0345$\\
\hline
\centering $\alpha=0.1261 + 0.3001i$, $\beta=0.0021 + 0.9511i$ &
\begin{center}
$0.325$
\end{center} & $\abs{\alpha+1}=1.1654$ \& $\abs{\beta}=0.9511$\\
\hline
\centering $\alpha=0.1741 + 0.2446i$, $\beta=0.6409 + 0.8086i$ &
\begin{center}
$1.225$
\end{center} & $\abs{\alpha+1}=1.1993$ \& $\abs{\beta}=1.0318$\\
\hline
\centering $\alpha=0.1053 + 0.2682i$, $\beta=0.7638 + 0.8055i$ &
\begin{center}
$0.645$
\end{center} & $\abs{\alpha+1}=1.1374$ \& $\abs{\beta}=1.1101$\\
\hline
\centering $\alpha=0.0007 + 0.2836i$, $\beta=0.5508 + 0.8709i$ &
\begin{center}
$1.2678$
\end{center} & $\abs{\alpha+1}=1.0401$ \& $\abs{\beta}=1.0305$\\
\hline
\end{tabular}
\caption{Chaotic solutions of the equation Eq.(\ref{equation:total-equationA}) for different choice of parameters and initial values.}
\label{Table:}
\end{table}

\noindent
The chaotic trajectory plots including corresponding complex plots are given the following Fig. 6.

\begin{figure}[H]
      \centering

      \resizebox{14cm}{!}
      {
      \begin{tabular}{c}
      \includegraphics [scale=5]{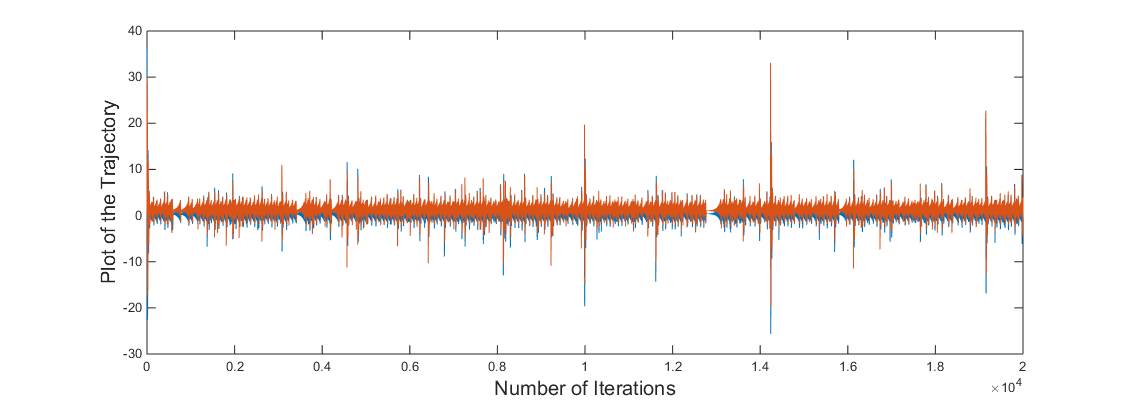}\\
      \includegraphics [scale=5]{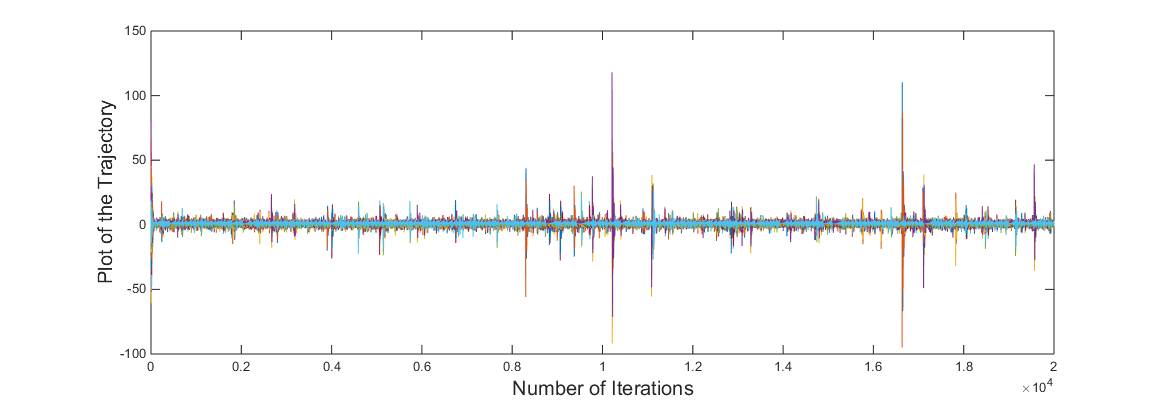}\\
      \includegraphics [scale=5]{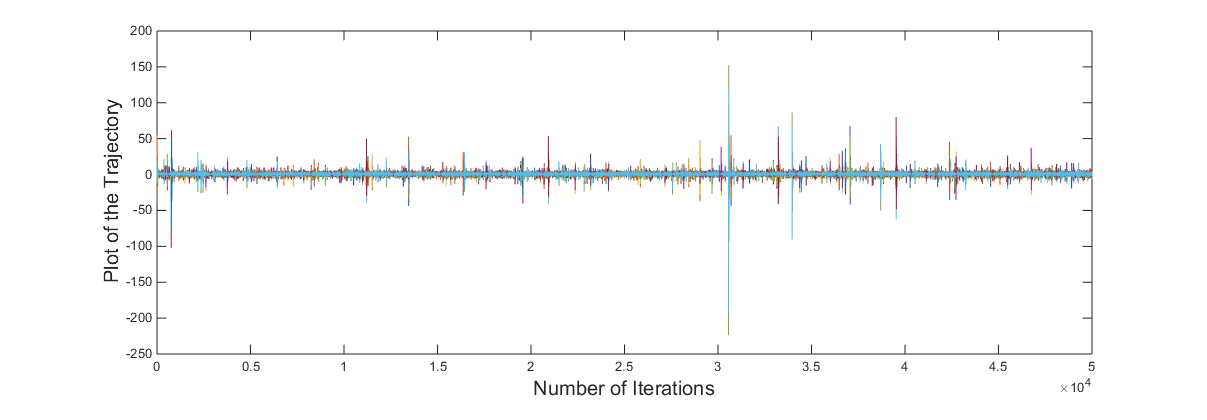}
            \end{tabular}
      }
\caption{Chaotic Trajectories of the equation Eq.(\ref{equation:total-equationA}) of the first row parameters as stated in Table 4.}
      \begin{center}

      \end{center}
      \end{figure}

\noindent
In the Fig. 6, for each of the four cases ten different initial values are taken and plotted in the left and in the right corresponding complex plots are given. From the Fig. 6, it is evident that for the four different cases the basin of the chaotic attractor is neighbourhood of the centre $(0, 0)$ of complex plane.\\

\noindent
In all the seven example cases in the Table 4, it is seen that the condition $\abs{\alpha+1}>\abs{\beta}$ is satisfied. based on this observation, the following conjectures has been made.\\

\begin{conjecture}
The chaotic solutions exist for the Eq.(\ref{equation:total-equationA}) if only if the condition $\abs{\alpha+1}>\abs{\beta}$ is satisfied.
\end{conjecture}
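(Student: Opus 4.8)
\noindent
The plan is to read the biconditional as a statement about the \emph{parameter regime in which chaos can occur}, and to split it into a necessity part (chaos $\Rightarrow\abs{\alpha+1}>\abs{\beta}$) and a sufficiency part (some solution in the regime $\abs{\alpha+1}>\abs{\beta}$ is genuinely chaotic). The necessity direction is the one I expect to be provable from the results already in hand, while the sufficiency direction is the genuine obstacle. I would therefore first isolate what ``chaos'' is allowed to mean here: a positive largest Lyapunov exponent \emph{supported on a bounded invariant set}, so that the exponent certifies sensitive dependence on a compact attractor rather than mere separation of escaping orbits.

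\noindent
For necessity I would argue contrapositively, ruling out chaos on the complementary set $\abs{\beta}\ge\abs{\alpha+1}$. When $\abs{\beta}>\abs{\alpha+1}$ the unboundedness behaviour (the complex analogue of case~(3) of the trichotomy recalled in the introduction, supported by Table~1) drives every orbit off to infinity, so there is no bounded invariant set to carry a strange attractor and the numerically computed exponent cannot certify chaos in the attractor sense. When $\abs{\beta}=\abs{\alpha+1}$ I would try to extend the conserved-quantity argument of \emph{Theorem 4.1}: the invariant $\mathfrak{J}_n$, the identity \eqref{sa1}, and the telescoping product \eqref{sa4} force $z_{n+1}-z_{n-1}$ to behave like a product of the factors $(\alpha+1)/(1+z_k)$, which collapses the asymptotic dynamics onto period-two orbits and yields a non-positive largest exponent. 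A caveat I would flag here is that \emph{Theorem 4.1} as stated uses the exact relation $\beta=\alpha+1$, whereas $\abs{\beta}=\abs{\alpha+1}$ only constrains moduli; promoting the conserved quantity to the whole boundary circle is itself a small gap to be closed.

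\noindent
For sufficiency the strategy would be to first promote \emph{Theorem 3.1} from a bare invariance statement to the existence of a bounded, forward-invariant, \emph{absorbing} region $R\subset\mathbb{C}^2$ for the planar map $F(z_{n-1},z_n)=\bigl(z_n,\tfrac{\alpha+\alpha z_n+\beta z_{n-1}}{1+z_n}\bigr)$ whenever $\abs{\alpha+1}>\abs{\beta}$, so that a compact attractor is trapped inside $R$. I would then certify a positive largest Lyapunov exponent on this attractor, either by exhibiting a Smale horseshoe and symbolic dynamics for $F$ on a suitable subregion, or by validated interval-arithmetic estimates on the derivative cocycle $DF$ along a trapped orbit that bound the top exponent below by a positive constant.

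\noindent
The main obstacle is precisely this sufficiency direction. In the strong reading ``every solution is chaotic'' the biconditional is false, since the same regime $\abs{\alpha+1}>\abs{\beta}$ also contains parameters for which all solutions converge to a finite limit (the analogue of case~(1)) and parameters producing high but finite periods (Table~3); bounded and chaotic are not synonymous. A clean theorem therefore requires the weaker reading ``there exist parameters with $\abs{\alpha+1}>\abs{\beta}$ admitting a chaotic solution,'' and even then establishing a rigorous positive Lyapunov exponent for a two-dimensional rational map is a Benedicks--Carleson / Jakobson type problem, typically settled only for distinguished parameter families or by computer-assisted proof rather than in closed form across the whole region.
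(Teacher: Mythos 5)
You should know at the outset that the paper contains no proof of this statement: it is offered explicitly as a conjecture, whose only support is the seven parameter choices of Table~4 (numerically estimated positive Lyapunov exponents) together with the observation that $\abs{\alpha+1}>\abs{\beta}$ holds in each case. There is therefore no argument in the paper to compare yours against, and any genuine proof would be new mathematics. Judged on its own terms, your proposal is a research plan rather than a proof, and to your credit you say so: neither direction is actually established in it. For necessity, the claim that $\abs{\beta}>\abs{\alpha+1}$ drives every orbit to infinity is itself only numerically supported in the paper (Table~1); no complex analogue of the real trichotomy is proved anywhere, so your contrapositive rests on an unproved input. On the boundary circle your plan fails at exactly the point you flag: the conserved quantity $\mathfrak{J}_n$ and the identity $\mathfrak{J}_{n+1}=\frac{\alpha+1}{1+z_n}\mathfrak{J}_n$ of Theorem~4.1 are algebraic consequences of the \emph{exact} relation $\beta=\alpha+1$, and they have no analogue when only the moduli agree; the condition $\abs{\beta}=\abs{\alpha+1}$ defines a real three-parameter family on which no invariant of this type exists, so ``promoting the conserved quantity to the whole boundary circle'' is not a small gap but a dead end. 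The sufficiency direction is, as you correctly identify, a Jakobson/Benedicks--Carleson-type problem for a two-dimensional rational map, well beyond both the paper and your sketch; note also that Theorem~3.1 as stated cannot even seed your absorbing-region step, since its hypothesis $\abs{\alpha}+\abs{\beta}\leq 1-\epsilon-\frac{\abs{\alpha}}{\epsilon}$ is a much stronger smallness condition than $\abs{\alpha+1}>\abs{\beta}$ and fails for all of the chaotic parameters in Table~4.

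Your most valuable observation is one the paper itself misses: the conjecture as literally stated is inconsistent with the paper's own data. Table~3 exhibits parameters (e.g.\ $\alpha=89+55i$, $\beta=32+90i$, with $\abs{\alpha+1}=105.48>\abs{\beta}=95.52$) for which \emph{all} solutions are reported periodic of period 13, and Theorem~2.1 together with the $\alpha=\beta=1+i$ example gives parameters in the same regime with a locally asymptotically stable equilibrium; hence the direction ``$\abs{\alpha+1}>\abs{\beta}$ implies chaotic solutions exist'' cannot hold pointwise in the parameters. The only defensible content of the biconditional is necessity --- chaos occurs only in the regime $\abs{\alpha+1}>\abs{\beta}$ --- and your reorganization of the statement around that reading is the right correction. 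But the verdict must be plain: what you have written does not prove the statement, the statement in its strong reading is false by the paper's own tables, and the paper offers no proof for you to have matched.
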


\begin{figure}[H]
      \centering

      \resizebox{17cm}{!}
      {
      \begin{tabular}{c}
      \includegraphics [scale=4]{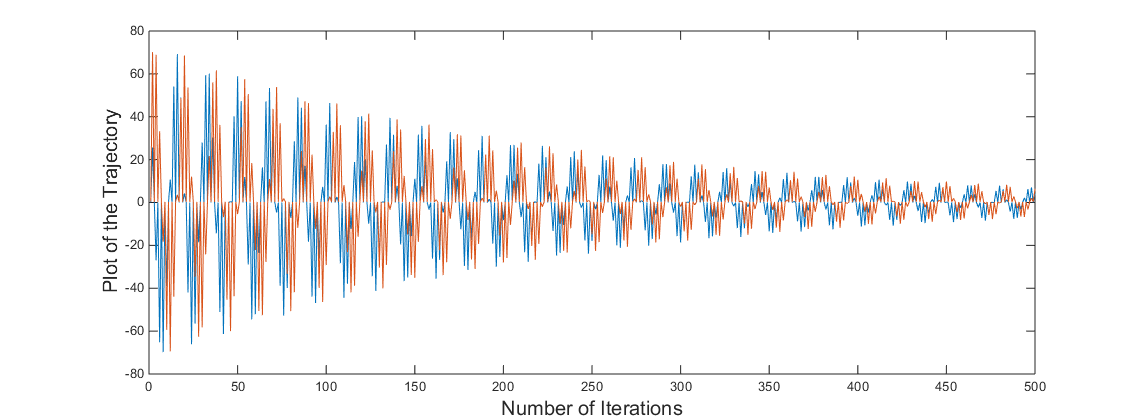}
      \includegraphics [scale=4]{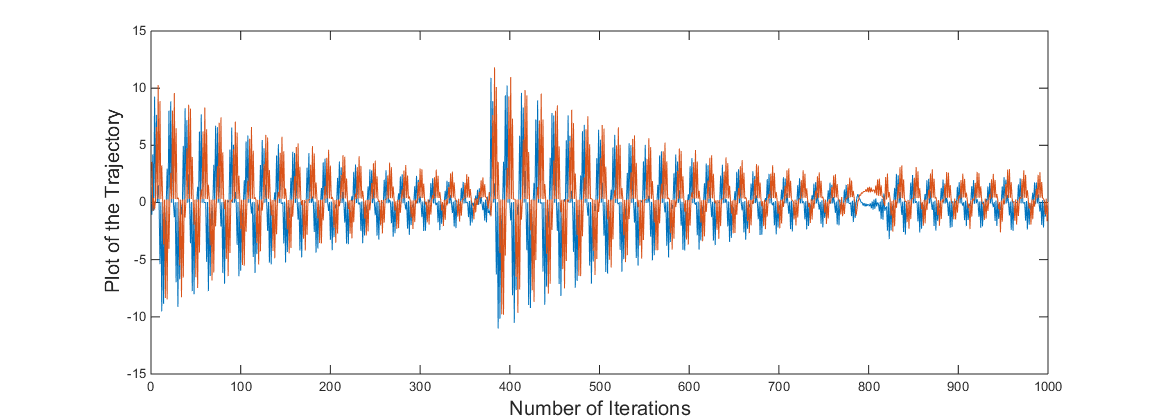}\\
      \includegraphics [scale=4]{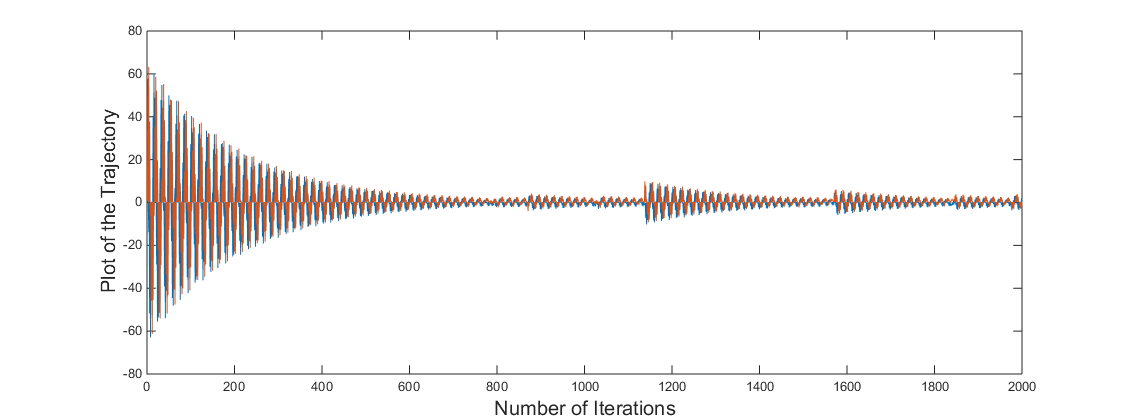}
      \includegraphics [scale=4]{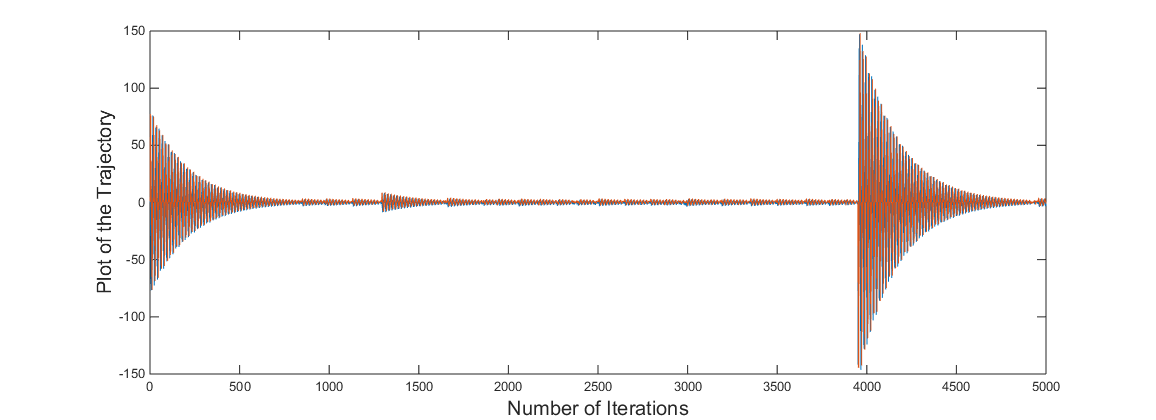}\\
      \includegraphics [scale=4]{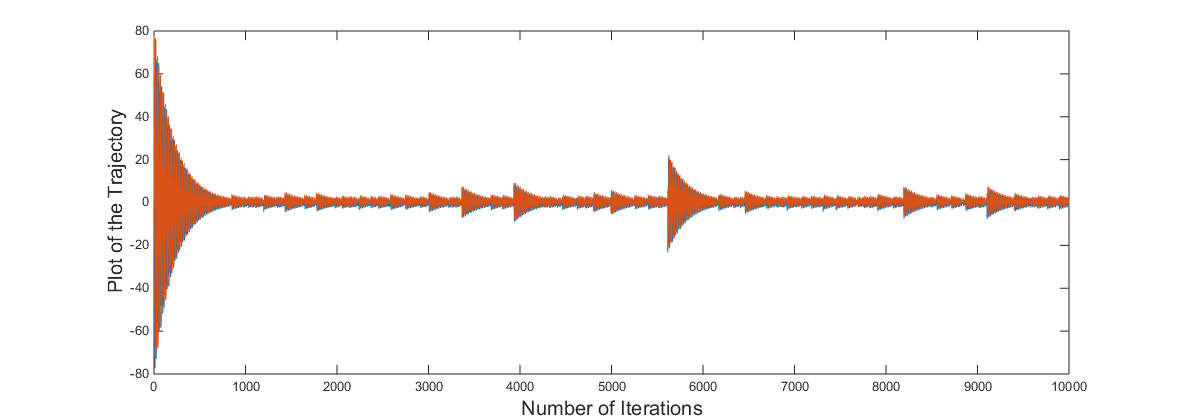}
      \includegraphics [scale=4]{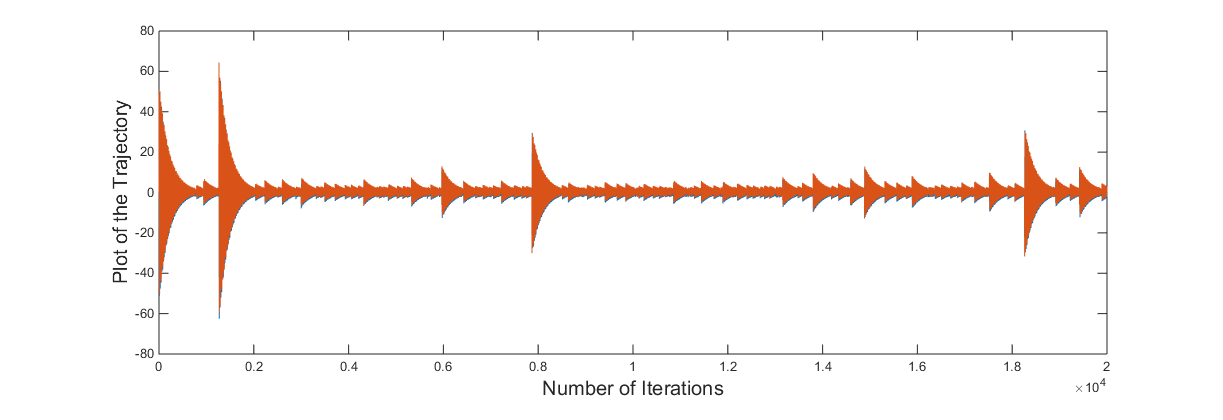}\\
      \includegraphics [scale=4]{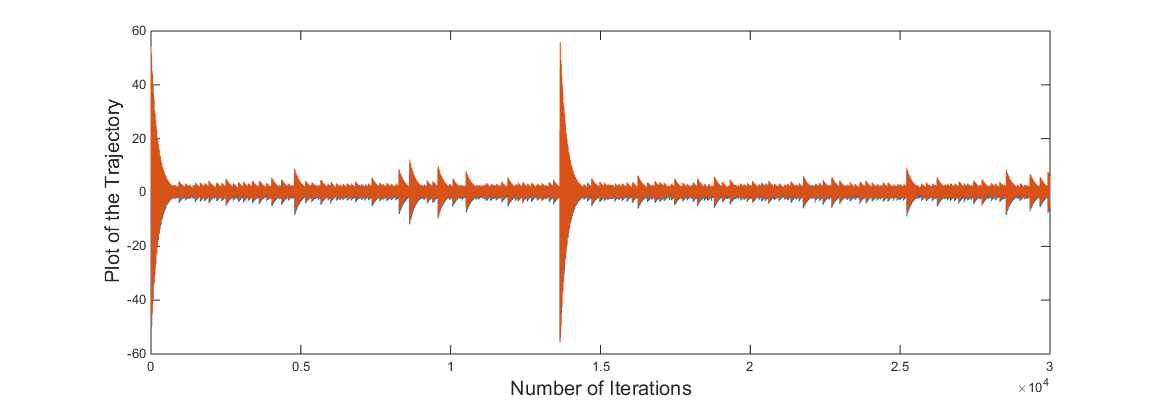}
      \includegraphics [scale=4]{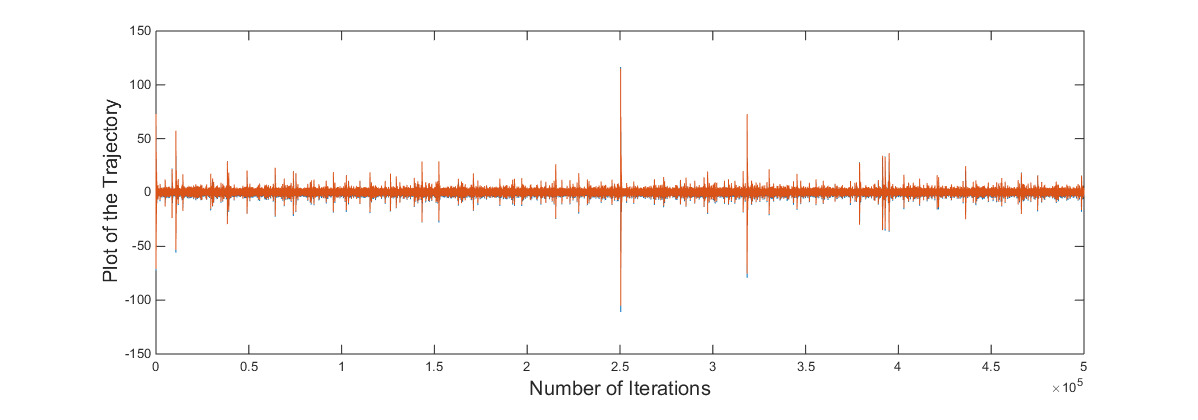}\\

            \end{tabular}
      }
\caption{Fractal-like Chaotic Trajectories of the equation Eq.(\ref{equation:total-equationA}) of the last row parameters from top of Table 4.}
      \begin{center}

      \end{center}
      \end{figure}

\noindent
The chaotic trajectory of the parameters of the last row from top of the Table 4 is given in the Fig. 7. It is to be noted that the for the nine different number of iterations, trajectories are plotted. Each trajectory looks like a self similar fractal. But eventually over iterations the trajectory becomes chaotic.


\section*{Acknowledgement}
The author thanks \emph{\textcolor[rgb]{0.00,0.00,1.00}{Dr. Pallab Basu}} of ICTS, TIFR for discussions and suggestions.


\end{document}